\documentclass[a4paper, 11pt]{amsart}

\usepackage{amsmath,amssymb,verbatim,graphicx,amsthm,color}
\usepackage[top=1in, bottom=1in, left=1in, right=1in]{geometry}
\usepackage{cite}
\raggedbottom
\allowdisplaybreaks[1]
\usepackage[sc]{mathpazo}
\linespread{1.2}         
\usepackage[T1]{fontenc}

\usepackage[backref=page,colorlinks=true]{hyperref} 
\usepackage{ifthen}
\usepackage{tikz}
\usetikzlibrary{decorations.pathreplacing}
\usepackage{tikz-cd}
\renewcommand*{\backref}[1]{}
\renewcommand*{\backrefalt}[4]{\tiny
	\ifcase #1 (\textbf{NOT CITED.})%
	\or    (Cited on page~#2.)%
	\else   (Cited on pages~#2.)%
	\fi}

\newcommand{\vertiii}[1]{{\left\vert\kern-0.25ex\left\vert\kern-0.25ex\left\vert #1 
    \right\vert\kern-0.25ex\right\vert\kern-0.25ex\right\vert}}
\newcommand{\braciii}[1]{{\left[\kern-0.25ex\left[\kern-0.25ex\left[ #1 
		\right]\kern-0.25ex\right]\kern-0.25ex\right]}}
\newcommand{\norm}[1]{\left\lVert#1\right\rVert}

\newcommand{\RR}{\mathbb{R}}
\newcommand{\CC}{\mathbb{C}}
\newcommand{\ZZ}{\mathbb{Z}}

\newcommand{\NN}{\mathbb{N}}
\newcommand{\TT}{\mathbb{T}}

\newcommand{\ind}{\mathbb{1}}

\theoremstyle{plain}

\newtheorem{theorem}{Theorem}[section]

\newtheorem{lemma}[theorem]{Lemma}

\title[Good weight for commuting transformations]{A good universal weight for multiple recurrence averages with commuting transformations in norm}

\author{Idris Assani}
\address{Department of Mathematics, The University of North Carolina at Chapel Hill, 
Chapel Hill, NC 27599}
\email{assani@math.unc.edu}
\urladdr{https://idrisassani.web.unc.edu/} 

\author{Ryo Moore}
\address{Department of Mathematics, Southern University of Science and Technology, Shenzhen, Guangdong, China}
\email{ryomoore@sustech.edu.cn}
\urladdr{http://sites.google.com/view/ryomoore} 

\usepackage{setspace}
\begin{document}
\maketitle
\begin{abstract}
	We will show that the sequences appearing in Bourgain's double recurrence result are good universal weights to the multiple recurrence averages with commuting measure-preserving transformations in norm. This will extend the pointwise converge result of Bourgain, the norm convergence result of Tao, and the authors' previous work on the single measure-preserving transformation.
\end{abstract}

\section{Introduction}
We denote $(X, \mathcal{F}, \mu, T)$ to be a measure-preserving system, where $(X, \mathcal{F}, \mu)$ is a probability measure space. If there are more than one measure transformations, say $T_1, \ldots, T_k$ for some $k \in \NN$, acting on the probability space $(X, \mathcal{F}, \mu)$, we will list out all of the transformations at the end by denoting $(X, \mathcal{F}, \mu, T_1, T_2, \ldots, T_k)$.

This note is a sequel to \cite{AM}, and we refer the readers to that article for more detailed background on this subject.

\subsection{The main theorem}
In this short note, we will prove the following:

\begin{theorem}\label{mainThm}
	Let $(X, \mathcal{F}, \mu, T)$ be an ergodic, invertible, measure-preserving system, and suppose $f_1, f_2 \in L^\infty(\mu)$, and $a, b$ be distinct nonzero integers. Then there exists a set $X' \subset X$ such that $\mu(X') = 1$, and for every $x \in X'$ and any other measure-preserving system with $k$ commuting transformations $(Y, \mathcal{G}, \nu, S_1, S_2, \ldots, S_k)$ and $g_1, g_2, \ldots, g_k \in L^\infty(\nu)$, the averages
	\[ \frac{1}{N} \sum_{n=1}^N f_1(T^{an}x)f_2(T^{bn}x) \prod_{i=1}^k g_i \circ S_i^n  \]
	converge in $L^2(\nu)$.
\end{theorem}
We will provide some context here: The a.e. pointwise convergence result of the averages
\[\frac{1}{N} \sum_{n=1}^N f_1(T^{an}x)f_2(T^{bn}x) \]
was shown by Bourgain \cite{BoDR}, while the $L^2(\nu)$-convergence of the averages
\[ \frac{1}{N} \sum_{n=1}^N \prod_{i=1}^k g_i \circ S_i^n  \]
was originally obtained by Tao \cite{T}. These types of averages are often referred to as multiple recurrence averages (or nonconventional ergodic averages), and they stem from Furstenberg's study on proving the Szemer\'edi theorem by ergodic theory \cite{Fu}.

The type of weighted ergodic averages, where the weight stems from ergodic averages of another system, falls into the study of return times averages, which was initially studied by Brunel \cite{Br}. Bourgain obtained one of the key results in this field of study \cite{BoRet} (which is known as the return times theorem), while a simpler proof of his result was obtained shortly after by his joint work with Furstenberg, Katznelson, and Ornstein \cite{BFKO}. The idea of using the sequence of multiple recurrence averages as a weight can be first found in a work of the first author \cite{A}. 

\subsection{Reduction}\label{reduction}
We will reduce the theorem to a case where either one of the functions $f_1$ or $f_2$ is orthogonal to the appropriate Host-Kra-Ziegler factor (cf. \cite{HK, Z}). To do so, we consider the following criterion of weighted ergodic averages due to Frantzikinakis and Host \cite{FH}. We recall that if $G$ is an $s$-step nilpotent Lie group and $\Gamma$ is a discrete co-compact subgroup of $G$, then we call $G/\Gamma$ an $s$-step nilmanifold. A measure preserving system in which an action defined by a left translation of $G/\Gamma$ is an $s$-step nilsystem (with the normalized Haar measure).  Given $d \in \NN$, a nilsequence is a bounded sequence of the form $\psi(n_1, n_2, \ldots, n_d) := F(g_1^{n_1}g_2^{n_2}\cdots g_d^{n_d}x)$, where $g_i \in G$ for $1 \leq i \leq d$, $x \in G/\Gamma$, and $F: G/\Gamma \to \CC$ is a continuous function.

\begin{theorem}[{\cite[Theorem 2.4 and Proposition 6.1]{FH}}]\label{FH}
	Let $k \in \NN$. If $w \in \ell^\infty(\NN)$ is a sequence such that the limit
	\[ \lim_{M, N \to \infty} \frac{1}{MN} \sum_{m=1}^M \sum_{n=1}^N w(n) \overline{w(m)} \psi(n, m) \]
	exists for every $k$-step nilsequence $\psi: \NN^2 \to \CC$, then for every  measure-preserving system with commuting transformations $(Y, \mathcal{G}, \nu, S_1, \ldots, S_k)$ and functions $g_1, g_2, \ldots, g_k \in L^\infty(\nu)$, the limit
	\[ \frac{1}{N} \sum_{n=1}^N w(n) \prod_{i=1}^k g_i \circ S_i^n \]
	exists in $L^2(\nu)$-norm.
\end{theorem}

In order to utilize this criterion, we need the following convergence result due to Leibman:

\begin{theorem}[{\cite[Theorem A]{L}}]\label{Leibman}
	Let $\psi: \NN^2 \to \CC$ be a nilsequence. Then for any F{\o}lner sequence $\{ \Phi_N\}$ in $\ZZ^2$, the limit $\lim_{N \to \infty} (1/|\Phi_N|) \sum_{n \in \Phi_N} \psi(n)$ exists.
\end{theorem}

Suppose that $f_1$ and $f_2$ both belong to the $(k+1)$-th Host-Kra-Ziegler factor $\mathcal{Z}_{k+1}$. Our goal is to show that if $w(n) = f_1(T^{an}x)f_2(T^{bn}x)$, then $w(n)$ satisfies the criterion given in Theorem \ref{FH}. For simplicity we will assume that for both $i = 1, 2$, $\norm{f_i}_{L^\infty(\mu)} \leq 1$. We recall that $(X, \mathcal{Z}_{k+1}, \mu, T)$ is an inverse limit of $k+1$ nilsystems by the structure theorem of Host and Kra \cite{HK}. This result can be re-interpreted as follows: If $f_1, f_2 \in L^\infty(\mathcal{Z}_{k+1})$, then for any $\epsilon > 0$, there exists a $k+1$-th nilsystem $(M, \rho, T)$ and a continuous factor map $\pi: Z_s \to M$ such that for $i = 1, 2$, $f_i$ can be written as
	\[f_i = f_i^{nil} \circ \pi + f_i^{sml} \, , \]
	where $f_i^{nil}$ is a continuous function on $M$ for which $\norm{f_i^{nil}}_{L^\infty(\rho)} \leq \norm{f_i}_{L^\infty(\mu)}$, and $\norm{f_i^{sml}}_{L^1(\mu)} < \epsilon$.
 Let $w(n) := f_1(T^{an}x)f_2(T^{bn}x)$ and $\psi \in \ell^\infty(\NN^2)$ be a nilsequence. Then
\begin{align*} 
\frac{1}{NM} \sum_{n=1}^N \sum_{m=1}^M w(n)\overline{w(m)}\psi(n, m)
&= \frac{1}{NM}\sum_{n=1}^N f_1^{nil}(T^{an}x)f_2(T^{bn}x) \sum_{m=1}^M \overline{w(m)}\psi(n, m)  \\
&+ \frac{1}{NM}\sum_{n=1}^N f_1^{sml}(T^{an}x)f_2(T^{bn}x) \sum_{m=1}^M\overline{w(m)}\psi(n, m).
\end{align*}
We focus on the second term on the right-hand side of the equation above. We note that
\begin{align*}
&\left|\frac{1}{NM}\sum_{n=1}^N f_1^{sml}(T^{an}x)f_2(T^{bn}x) \sum_{m=1}^M\overline{w(m)}\psi(n, m)\right| \\
&\leq \frac{1}{N} \sum_{n=1}^N  \left|f_1^{sml}(T^{an}x)f_2(T^{bn}x)\right| \left| \frac{1}{M} \sum_{m=1}^M \overline{w(m)}\psi(n, m)\right|\\
&\leq \norm{\psi}_{\ell^\infty(\NN^2)} \frac{1}{N} \sum_{n=1}^N |f_1^{sml}(T^{an}x)|.
\end{align*}
Since $T$ is ergodic, recall that there exists an integral kernel
\[ K(x, y) = \ell \sum_{i=1}^\ell \ind_{A_i}(x)\ind_{A_i}(y)\]
for some $\ell \in \NN$ and $T^a$-invariant sets $\{A_i : i=1, 2, \ldots, \ell\}$ such that every $T^a$-invariant function $\xi$ can be expressed as
\[\xi(x) = \int \xi(y) K(x, y) d\mu(y) \]
(see \cite[Lemma 5.2]{ADM} for a proof). This, together with the Birkhoff-Khinchin ergodic theorem tells us that for $\mu$-a.e. $x \in X$, we have
\[\lim_{N \to \infty} \frac{1}{N} \sum_{n=1}^N |f_1^{sml}(T^{an}x)| = \int K(x, y) |f_1^{sml}|(y) d\mu(y) \leq \ell \norm{f_1^{sml}}_{L^1(\mu)} < \ell \epsilon. \]
Combining all of these together, we see that there exists a constant $C$ (that may depend on $\psi$ and $a$) for which
\[\limsup_{N, M \to \infty} \left|\frac{1}{NM}\sum_{n=1}^N f_1^{sml}(T^{an}x)f_2(T^{bn}x) \sum_{m=1}^M\overline{w(m)}\psi(n, m)\right| < C\epsilon. \]
Set $w^{nil}(n) := f_1^{nil} \circ \pi (T^{an}x) f_2^{nil} \circ \pi (T^{bn}x)$. We denote $\Re(z)$ to be the real part of the complex number $z$. Applying this decomposition trick to the other cases (for $f_2, \overline{f_1}$, and $\overline{f_2}$), we get
\begin{align*}
&\limsup_{N, M \to \infty} \Re \left( \frac{1}{NM} \sum_{n=1}^N \sum_{m=1}^M w(n)\overline{w(m)}\psi(n, m) \right) \\
&\leq \limsup_{N, M \to \infty} \Re \left( \frac{1}{NM} \sum_{n=1}^N \sum_{m=1}^M w^{nil}(n)\overline{w^{nil}(m)}\psi(n, m) \right) + C\epsilon,
\end{align*}
and
\begin{align*}
&\liminf_{N, M \to \infty} \Re \left( \frac{1}{NM} \sum_{n=1}^N \sum_{m=1}^M w(n)\overline{w(m)}\psi(n, m) \right) \\
&\geq \liminf_{N, M \to \infty} \Re \left( \frac{1}{NM} \sum_{n=1}^N \sum_{m=1}^M w^{nil}(n)\overline{w^{nil}(m)}\psi(n, m) \right) - C\epsilon,
\end{align*}
for some constant $C > 0$ that may depend on $\psi$, $a$, and $b$.

Note that $\{w^{nil}(n)\}_{n \in \NN}$ is a nilsequence, which implies that $\{w^{nil}(n)\overline{w^{nil}(m)}\psi(n, m)\}_{(n, m) \in \NN^2}$ is a nilsequence as well. Theorem $\ref{Leibman}$ implies that the limit
\[\lim_{N, M \to \infty}\frac{1}{NM} \sum_{n=1}^N \sum_{m=1}^M w^{nil}(n)\overline{w^{nil}(m)}\psi(n, m) \]
exists. Therefore,
\[\limsup_{N, M \to \infty} \Re \left( \frac{1}{NM} \sum_{n=1}^N \sum_{m=1}^M w(n)\overline{w(m)}\psi(n, m) \right) - \liminf_{N, M \to \infty} \Re \left( \frac{1}{NM} \sum_{n=1}^N \sum_{m=1}^M w(n)\overline{w(m)}\psi(n, m) \right) < 2C\epsilon, \]
and since our choice of $\epsilon$ is arbitrary, we can conclude that the limit
\[\lim_{N, M \to \infty} \Re \left( \frac{1}{NM} \sum_{n=1}^N \sum_{m=1}^M w(n)\overline{w(m)}\psi(n, m) \right) \]
exists, and so does its imaginary counterpart. Therefore, we know that the limit
\[ \lim_{N, M \to \infty} \frac{1}{NM} \sum_{n=1}^N \sum_{m=1}^M w(n)\overline{w(m)}\psi(n, m) \]
exists, so by Theorem \ref{FH}, Theorem \ref{mainThm} holds for the case where $f_1$ and $f_2$ belong to the $k+1$-th Host-Kra-Ziegler factor for $\mu$-a.e. $x \in X$; we call this set of full measure $\hat{X}$.

It remains to show the existence of a set of full measure in which the averages converge in a case either $f_1$ or $f_2$ belongs to the orthogonal complement of $L^2(\mathcal{Z}_k)$. In other words, we need to prove the following:

\begin{theorem}\label{CVto0Thm} Let the notations be as in Theorem $\ref{mainThm}$. Suppose that $T$ is ergodic. If either $f_1$ or $f_2$ belongs to the orthogonal complement of the $k+1$-th Host-Kra-Ziegler factor of $T$, then there exists a set of full measure $\tilde{X}$ such that for any $x \in \hat{X}$, we have
	\begin{equation}\label{CVto0}
	\limsup_{N \to \infty} \norm{\frac{1}{N} \sum_{n=0}^{N-1} f_1(T^{an}x)f_2(T^{bn}x)\prod_{i=1}^k g_i \circ S_i^k}_{L^2(\nu)} = 0.
	\end{equation}
\end{theorem}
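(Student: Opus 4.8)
The plan is to establish, by induction on the number $k\geq1$ of commuting transformations, the following quantitative form of \eqref{CVto0}: normalizing all functions to have $L^\infty$-norm at most $1$, there is a single full-measure set $X^1_{f_1,f_2}$ such that for every $x\in X^1_{f_1,f_2}$, all distinct nonzero $a,b\in\ZZ$, every $k$, every commuting system $(Y,\mathcal{G},\nu,S_1,\ldots,S_k)$ and all $g_i\in L^\infty(\nu)$,
\[
\limsup_{N\to\infty}\norm{\frac1N\sum_{n=0}^{N-1}f_1(T^{an}x)f_2(T^{bn}x)\prod_{i=1}^kg_i\circ S_i^n}_{L^2(\nu)}\ \leq\ C_{a,b,k}\,\min\bigl(\tnorm{f_1}_{\mathcal{Z}_{k+1}},\tnorm{f_2}_{\mathcal{Z}_{k+1}}\bigr)^{c_k},
\]
where $\tnorm{\cdot}_{\mathcal{Z}_{k+1}}$ is the Gowers--Host--Kra seminorm whose kernel is the orthogonal complement of $\mathcal{Z}_{k+1}(T)$, and $c_k>0$. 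Theorem~\ref{CVto0Thm} follows at once, since its hypothesis forces one of these two seminorms to vanish. (The degenerate cases $a=b$ or $ab=0$ are simpler and are subsumed by the single-transformation weight results of \cite{NewUnivWeight_Norm} and \cite{HostKraUniformity}.) I carry the quantitative bound rather than the bare vanishing because, after the van der Corput step below, the modified weights turn out to be orthogonal to the relevant factor only \emph{on average in the shift parameter}, so a self-improving induction must keep the seminorm on the right-hand side.

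For the base case $k=1$, I would apply the spectral theorem for $g_1$ and $S_1$ to write the square of the $L^2(\nu)$-norm as $\int_{\TT}\bigl|\frac1N\sum_{n<N}f_1(T^{an}x)f_2(T^{bn}x)e^{2\pi int}\bigr|^2\,d\sigma_{g_1}(t)$, which is bounded by $\sup_{t\in\TT}\bigl|\frac1N\sum_{n<N}f_1(T^{an}x)f_2(T^{bn}x)e^{2\pi int}\bigr|^2$; the uniform Wiener--Wintner theorem for the double recurrence \cite{WWDR} then supplies a single full-measure set, depending only on $f_1,f_2,a,b$, on which the $\limsup_N$ of this supremum is bounded by a power of $\min(\tnorm{f_1}_{\mathcal{Z}_2},\tnorm{f_2}_{\mathcal{Z}_2})$.

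For the inductive step $k\geq2$, I would apply the van der Corput inequality in $L^2(\nu)$ to $v_n=f_1(T^{an}x)f_2(T^{bn}x)\prod_{i=1}^kg_i\circ S_i^n$; after precomposing the inner product $\la v_{n+h},v_n\ra_{L^2(\nu)}$ with $S_1^{-n}$ and using that the $S_i$ commute, one obtains, writing $R_i:=S_iS_1^{-1}$ (again commuting), $F_{1,h}:=(f_1\circ T^{ah})\overline{f_1}$, $F_{2,h}:=(f_2\circ T^{bh})\overline{f_2}$ and $\tilde g_{i,h}:=(g_i\circ S_i^h)\overline{g_i}$, the identity
\[
\frac1N\sum_{n<N}\la v_{n+h},v_n\ra_{L^2(\nu)}=\la\,\frac1N\sum_{n<N}F_{1,h}(T^{an}x)F_{2,h}(T^{bn}x)\prod_{i=2}^k\tilde g_{i,h}\circ R_i^n\,,\ \overline{\tilde g_{1,h}}\,\ra_{L^2(\nu)}.
\]
By Cauchy--Schwarz this is controlled by the $L^2(\nu)$-norm of a genuine $(k-1)$-fold commuting average on $(Y,\mathcal{G},\nu,R_2,\ldots,R_k)$ with weight $F_{1,h}(T^{an}x)F_{2,h}(T^{bn}x)$ and functions $\tilde g_{i,h}$ (if some $S_i$ coincides with $S_1$ or is trivial, the number of transformations drops further), to which the inductive hypothesis applies. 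It then remains to average the resulting bound $\min(\tnorm{F_{1,h}}_{\mathcal{Z}_k},\tnorm{F_{2,h}}_{\mathcal{Z}_k})^{c_{k-1}}$ over $h$: using the Host--Kra recursion $\tnorm{f}_{\mathcal{Z}_{k+1}}^{p}=\lim_H\frac1H\sum_{h<H}\tnorm{(f\circ T^h)\overline{f}}_{\mathcal{Z}_k}^{q}$ (for the appropriate exponents $p,q$), together with the elementary fact that $\limsup_H\frac1H\sum_{h<H}a_{ah}\leq|a|\,\lim_M\frac1M\sum_{m<M}a_m$ for any nonnegative bounded sequence (this is where $a,b\neq0$ enters, to pass from $T^{ah},T^{bh}$ to $T^h$), one bounds the $h$-average by a power of $\min(\tnorm{f_1}_{\mathcal{Z}_{k+1}},\tnorm{f_2}_{\mathcal{Z}_{k+1}})$; a square root then closes the induction. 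The exceptional sets invoked along the induction are countable in number, and none of them depends on $(Y,\mathcal{G},\nu,S_i)$ or on the $g_i$, so their union is null and furnishes the single set $X^1_{f_1,f_2}$.

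The step I expect to be the main obstacle is the seminorm bookkeeping in the inductive step: one must verify that the van der Corput change of variables $R_i=S_iS_1^{-1}$ genuinely reduces the problem to a $(k-1)$-fold commuting average of the same shape with correctly modified weights, and that the $h$-average of $\tnorm{(f\circ T^{ah})\overline{f}}_{\mathcal{Z}_k}$ is governed by $\tnorm{f}_{\mathcal{Z}_{k+1}}$. Host's box seminorms and magic systems for commuting transformations \cite{Host_Commuting} provide the natural framework for carrying out and organizing this reduction.
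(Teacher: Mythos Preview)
Your overall architecture---van der Corput to peel off one transformation, the spectral theorem to land on a Wiener--Wintner supremum, and the Host--Kra recursion to climb back up the seminorm ladder---is exactly the one the paper uses. The gap is in \emph{what} you are carrying through the induction.

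You propose to prove, for $\mu$-a.e.\ $x$ in a set depending only on $f_1,f_2,a,b$, the \emph{pointwise} quantitative bound
\[
\limsup_{N\to\infty}\Bigl\|\frac1N\sum_{n<N}f_1(T^{an}x)f_2(T^{bn}x)\prod_{i=1}^k g_i\circ S_i^n\Bigr\|_{L^2(\nu)}
\ \le\ C_{a,b,k}\,\min_i\vertiii{f_i}_{k+2}^{c_k},
\]
and you cite \cite{WWDR} for the base case $k=1$. But \cite{WWDR} does not deliver a pointwise bound of this shape. What it gives (and what the paper quotes as \eqref{estimate}) is the \emph{integrated} estimate
\[
\int_X \limsup_{N}\sup_{t}\Bigl|\frac1N\sum_{n<N}f_1(T^{an}x)f_2(T^{bn}x)e^{2\pi i nt}\Bigr|^2 d\mu(x)\ \lesssim_{a,b}\ \min_i\vertiii{f_i}_3^2.
\]
From an $L^1$-bound on a nonnegative function you cannot conclude that the function is pointwise dominated by its integral on a full-measure set; the $\limsup\sup_t$ could exceed $\min_i\vertiii{f_i}_3$ on a set of positive measure. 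So the base case of your quantitative induction is not justified, and without it the inductive step (which needs the seminorm on the right-hand side precisely because $F_{i,h}$ is not individually in $\mathcal{Z}_k^\perp$) cannot start.

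The paper circumvents this by separating the two roles you are trying to merge. First it proves a purely \emph{pointwise} inequality (Lemma~\ref{dynEstMultLemma}) that contains no seminorm at all: iterated van der Corput plus the spectral theorem bound the $L^2(\nu)$-norm, for every $x$, by an expression of the form
\[
\liminf_{H_1}\Bigl(\frac1{H_1}\sum_{h_1}\cdots\liminf_{H_{k-1}}\frac1{H_{k-1}}\sum_{h_{k-1}}\limsup_N\sup_t\bigl|\tfrac1N\sum_n F_{1,\vec h}(T^{an}x)F_{2,\vec h}(T^{bn}x)e^{2\pi int}\bigr|^2\Bigr)^{2^{-(k-1)}}.
\]
Only \emph{after} this is established does the paper integrate over $x$; Fatou and H\"older push the integral inside, and then the integrated estimate \eqref{estimate} is applied to each $\vec h$ separately, after which the $h$-averages collapse via the Host--Kra recursion to a power of $\min_i\vertiii{f_i}_{k+2}$. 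Since the integrand is nonnegative and the integral vanishes under the hypothesis $f_i\in\mathcal{Z}_{k+1}^\perp$, the integrand vanishes a.e., which identifies the full-measure set $\tilde X_k$. Your scheme would recover this if you replaced the pointwise seminorm bound by the pointwise inequality of Lemma~\ref{dynEstMultLemma} and deferred the seminorm to a single integration at the end.
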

We take $X' := \hat{X} \cap \tilde{X}$, and this is the desired set of full measure to conclude the proof of Theorem \ref{mainThm}.

\subsection{Remark}
A purpose of this update to our preprint from 2015 is to fill in a gap in the proof that exists in our previous version (this gap has been filled by the argument made in \S\ref{reduction}). In the previous version, we have attempted to utilize the box seminorms that was introduced by Host \cite{H} as well as some machinery of higher order Fourier analysis. It should be noted that Theorem \ref{FH} had not been announced when the previous version of the preprint was being prepared (the last update of this preprint was announced on 21 September 2015 on arXiv, while the Frantzikinakis-Host paper was first announced on 18 November 2015). 

This note is a sequel to \cite{AM}, which proves Theorem \ref{mainThm} for the case where each $S_i$ is a power of a single measure-preserving transformation (i.e. $S_i = S^i$ for $i=1, 2, \ldots, k$ for some measure-preserving transformation $S$ on $(Y, \mathcal{G}, \nu)$). That manuscript was accepted for publication on 26 July 2015, and the result was proven without using Theorem \ref{FH}.

The argument made in the remainder of the paper has existed since the previous versions of the preprint, and no significant change has been made in this update. Another main purpose of this note is to show that the Wiener-Wintner type result, such as the one obtained in \cite{ADM}, can be used to show convergence of more complex return times type averages.

\section{Proof of Theorem \ref{CVto0Thm}}
For sake of simplicity, we assume that the functions $f_1, f_2, g_1, g_2, \ldots, g_k$ are real-valued, and that their respective $L^\infty$-norms are bounded above by $1$.

The proof presented here is analogous to that of the proof of \cite[Theorem 1.5(a)]{AM} for the case we had a single measure-preserving transformation $S$ (i.e. $S_i = S^i$). We recall the following inequality that was obtained in the proof of the double recurrence Wiener-Wintner result \cite{ADM}:
\begin{equation}\label{estimate}
\int \limsup_{N \to \infty} \sup_{t \in \RR} \left|\frac{1}{N} \sum_{n=0}^{N-1} f_1(T^{an}x)f_2(T^{bn}x) e^{2\pi i n t} \right|^2 d\mu(x) \lesssim_{a, b} \min_{i = 1, 2}\vertiii{f_i}_3^2.
\end{equation}
In this section, we will denote $a_1 = a$ and $a_2 = b$. Furthermore, for every $l \in \NN$ and for every $\vec{h}(l) := (h_1, h_2, \ldots, h_l) \in \NN^l$, we denote
\[ \begin{array}{ll}
F_{1, \vec{h}(1)} = f_1 \cdot f_1 \circ T^{a_1h_1},
& F_{2, \vec{h}(1)} = f_2 \cdot f_2 \circ T^{a_2h_1}, \\
F_{1, \vec{h}(2)} = F_{1, \vec{h}(1)} \cdot F_{1, \vec{h}(1)} \circ T^{a_1h_2},
& F_{2, \vec{h}(2)} = F_{2, \vec{h}(1)} \cdot F_{2, \vec{h}(1)} \circ T^{a_2h_2}, \\
\cdots, & \cdots, \\
F_{1, \vec{h}(k-1)} = F_{1, \vec{h}(k-2)} \cdot F_{1, \vec{h}(k-2)} \circ T^{a_1h_{k-1}},
& F_{2, \vec{h}(k-1)} = F_{2, \vec{h}(k-2)} \cdot F_{2, \vec{h}(k-2)}\circ T^{a_2h_{k-1}} \, .
\end{array} \]

\begin{lemma}\label{dynEstMultLemma}
	Let all the notations be as above. Then for each positive integer $k \geq 2$, we have
	\begin{align}\label{dynEstMult}
	&\limsup_{N \to \infty} \norm{\frac{1}{N} \sum_{n=1}^N f_1(T^{a_1n}x)f_2(T^{a_2n}x) \prod_{i=1}^k g_i \circ S_i^{n}}_{L^2(\nu)}^2 \\
	&\lesssim_{a_1, a_2} \liminf_{H_1 \to \infty}  \left(\frac{1}{H_1} \sum_{h_1=1}^{H_1} \liminf_{H_2 \to \infty} \frac{1}{H_2} \sum_{h_2=1}^{H_2} \cdots \right. \nonumber \\
	& \left. \liminf_{H_{k-1} \to \infty} \frac{1}{H_{k-1}} \sum_{h_{k-1}=1}^{H_{k-1}} \limsup_{N \to \infty} \sup_{t \in \RR} \left| \frac{1}{N} \sum_{n=1}^N F_{1, \vec{h}(1)}(T^{a_1n}x)F_{2, \vec{h}(1)}(T^{a_2n}x) e^{2\pi i n t} \right|^2 \right)^{2^{-(k-1)}} \, . \nonumber
	\end{align}
\end{lemma}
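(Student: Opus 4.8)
The plan is to prove the estimate by induction on $k$, extracting one parameter $h_j$ at a time via a van der Corput–type argument applied to the $g_i$'s, while carrying along the companion shift applied to $f_1$ and $f_2$. The base of the induction ($k=2$) is essentially \eqref{estimate}: after one van der Corput step we are left with a single averaged expression of the form $\frac{1}{H_1}\sum_{h_1}\limsup_N\sup_t|\frac1N\sum_n F_{1,\vec h(1)}(T^{a_1n}x)F_{2,\vec h(1)}(T^{a_2n}x)e^{2\pi i nt}|^2$, and one invokes \eqref{estimate} together with the fact that $F_{1,\vec h(1)}, F_{2,\vec h(1)}$ are still bounded by $1$. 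For the inductive step I would assume the bound holds for $k-1$ commuting transformations and any pair of bounded functions, and apply it to the functions $F_{1,\vec h(1)}$, $F_{2,\vec h(1)}$ and to the transformations $S_2,\dots,S_k$ (suitably reindexed to $k-1$ transformations), which produces the nested $\liminf$ average over $h_2,\dots,h_{k-1}$; the outermost $\liminf_{H_1}\frac{1}{H_1}\sum_{h_1}$ comes from the very first van der Corput step.

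Concretely, the first step is the key manipulation. Writing $u_n = f_1(T^{a_1n}x)f_2(T^{a_2n}x)\prod_{i=1}^k g_i\circ S_i^n$, the van der Corput inequality gives
\[
\limsup_{N\to\infty}\Bigl\|\frac1N\sum_{n=1}^N u_n\Bigr\|_{L^2(\nu)}^2 \;\lesssim\; \limsup_{H_1\to\infty}\frac{1}{H_1}\sum_{h_1=1}^{H_1}\;\limsup_{N\to\infty}\Bigl|\frac1N\sum_{n=1}^N \langle u_{n+h_1}, u_n\rangle_{L^2(\nu)}\Bigr|,
\]
and the inner product unfolds (using that the $S_i$ are measure-preserving and commute, so $g_i(S_i^{n+h_1}y)g_i(S_i^n y) = (g_i\cdot g_i\circ S_i^{h_1})(S_i^n y)$) into
\[
f_1(T^{a_1n}x)f_1(T^{a_1(n+h_1)}x)\,f_2(T^{a_2n}x)f_2(T^{a_2(n+h_1)}x)\int \prod_{i=1}^k (g_i\cdot g_i\circ S_i^{h_1})\circ S_i^n\, d\nu,
\]
i.e. a scalar multiple of $F_{1,\vec h(1)}(T^{a_1n}x)F_{2,\vec h(1)}(T^{a_2n}x)$ times an average of the same shape with $k$ replaced by $k$ but one fewer "free" $h$-parameter to extract — here one must be a little careful: the cleanest bookkeeping is to absorb the new $h_1$-dependent $g_i$'s and recurse, rather than to literally drop a transformation. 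I would phrase the induction so that the inductive hypothesis is applied to the functions $F_{1,\vec h(1)}, F_{2,\vec h(1)}$ and the $k$ transformations $S_1,\dots,S_k$ but at the level $k-1$, using that after the $h_1$-step the number of remaining van der Corput iterations needed is $k-1$; pushing the $\frac{1}{H_1}\sum_{h_1}$ outside all subsequent limits is legitimate because the integrand is bounded by $1$, so Fatou/dominated convergence lets us interchange $\limsup$ and the finite average and turn the outer $\limsup_{H_1}$ into $\liminf_{H_1}$ after passing to a subsequence on the right-hand side (the inequality only goes one way, which is all we need).

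The main obstacle I anticipate is the bookkeeping of the nested limits and the correct shift structure: making sure that each van der Corput step applied to $\prod_i g_i\circ S_i^n$ produces precisely the transformation-shift $S_i^{h_j}$ that matches the $T^{a_ih_j}$ shift appearing in the definition of $F_{1,\vec h(j)}, F_{2,\vec h(j)}$, and that the iterated structure $F_{\cdot,\vec h(j)} = F_{\cdot,\vec h(j-1)}\cdot F_{\cdot,\vec h(j-1)}\circ T^{a_\cdot h_j}$ is exactly what the induction hypothesis consumes. A secondary technical point is the power $2^{-(k-1)}$: each van der Corput / Cauchy–Schwarz step squares the averaging, so after $k-1$ steps one has a $2^{k-1}$-th power inside, hence the exponent $2^{-(k-1)}$ on the whole nested expression; I would track this by an explicit induction on the exponent. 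The spectral input \eqref{estimate} is used only once, at the innermost level, to convert the final $\sup_t$ Fourier average of the doubly-shifted $f_1,f_2$ into something finite — and crucially into something independent of $\nu$ and the $S_i$, which is what ultimately yields the universal-weight conclusion downstream.
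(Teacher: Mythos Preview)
Your overall architecture (induction on $k$, van der Corput at each step) matches the paper, but there is a genuine gap at the heart of the argument: you never explain how, after a van der Corput step, the correlation $\int \prod_{i=1}^k (g_i\cdot g_i\circ S_i^{h_1})\circ S_i^n\, d\nu$ turns into an expression involving $\sup_t e^{2\pi i nt}$ (base case) or into an $L^2(\nu)$-norm with only $k-1$ transformations (inductive step). The paper's device is the same in both situations: use that $S_1$ is measure-preserving to rewrite the integral as $\int G_{1,h_1}\cdot\prod_{i=2}^k G_{i,h_1}\circ (S_iS_1^{-1})^n\, d\nu$, then discard the now $n$-independent factor $G_{1,h_1}$ via H\"older. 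In the base case $k=2$ this leaves $\|\frac1N\sum_n F_{1,h_1}(T^{a_1n}x)F_{2,h_1}(T^{a_2n}x)\,G_{2,h_1}\circ(S_2S_1^{-1})^n\|_{L^2(\nu)}^2$, and it is the \emph{spectral theorem} for the unitary $g\mapsto g\circ(S_2S_1^{-1})$ that converts this into $\int|\frac1N\sum_n F_{1,h_1}F_{2,h_1}e(nt)|^2\,d\sigma(t)\le \sup_t|\cdots|^2$. In the inductive step the same shift-and-drop leaves an $L^2(\nu)$-norm with the $k-1$ commuting transformations $S_2S_1^{-1},\dots,S_kS_1^{-1}$, to which the hypothesis applies directly. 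Your tentative ``keep all $k$ transformations but at level $k-1$'' does not make sense as stated; you really do need to shed one transformation, and the mechanism is this $S_1$-shift.

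A second, related confusion: the estimate \eqref{estimate} plays no role whatsoever in the proof of this lemma. The lemma is a purely pointwise-in-$x$ inequality whose right-hand side involves $\sup_t$ but is not further bounded; \eqref{estimate} enters only in the subsequent proof of Theorem~\ref{CVto0Thm}, where one integrates the right-hand side in $\mu$ to show the exceptional set is null. What you call ``the spectral input \eqref{estimate}'' is conflating two different things: the genuine spectral-theorem step described above (which lives on $(Y,\nu)$ and produces the $\sup_t$), and the Gowers--Host--Kra seminorm bound \eqref{estimate} (which lives on $(X,\mu)$ and is used later). Once you insert the $S_1$-shift / H\"older / spectral-theorem step and remove the reference to \eqref{estimate}, your sketch becomes the paper's proof.
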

\begin{proof}
	We will show this by induction on $k$. The prove the base case $k = 2$, we first apply van der Corput's lemma to see that
	\begin{align*} &\limsup_{N \to \infty} \norm{\frac{1}{N} \sum_{n=0}^{N-1} f_1(T^{a_1n}x)f_2(T^{a_2n}x)g_1(S_1^ny)g_2(S_2^ny)}_{L^2(\nu)}^2 \\
	&\lesssim \liminf_{H_1 \to \infty} \frac{1}{H_1} \sum_{h_1=0}^{H_1-1}\limsup_{N \to \infty}  \int \left|(g_1 \cdot g_1 \circ S_1^h)(y)  \frac{1}{N} \sum_{n=0}^{N-1} F_{1, \vec{h}(1)}(T^{an}x)F_{2, \vec{h}(1)}(T^{bn}x) (g_2 \cdot g_2 \circ S_2^h)((S_2S_1^{-1})^ny)\right| d\nu. \end{align*}
	By H\"{o}lder's inequality (and recalling that $\norm{g_1}_{L^\infty(\nu)} \leq 1$), we dominate the last line above by
	\[\liminf_{H_1 \to \infty} \frac{1}{H_1} \sum_{h_1=0}^{H_1-1}  \left(\limsup_{N \to \infty}  \int 
	\left|\frac{1}{N} \sum_{n=0}^{N-1} F_{1, \vec{h}(1)}(T^{a_1n}x)F_{2, 
	\vec{h}(1)}(T^{a_2n}x) (g_2 \cdot g_2 \circ S_2^h)((S_2S_1^{-1})^ny)\right|^2 d\nu\right)^{1/2}. \]
	Let $\sigma_{g \cdot g \circ S_2^h}$ be the spectral measure of $\TT$ for the function $g \cdot g \circ S_2^h$ for each $h$, with respect to the transformation $S_2S_1^{-1}$ (this is a Borel probability measure on $\TT$). By the spectral theorem, the last expression becomes
	\[\liminf_{H_1 \to \infty} \frac{1}{H_1} \sum_{h_1=0}^{H_1-1}  \left(\limsup_{N \to \infty} \int  \left| \frac{1}{N} \sum_{n=0}^{N-1} F_{1, \vec{h}(1)}(T^{a_1n}x)F_{2, \vec{h}(1)}(T^{a_2n}x)e^{2\pi int} \right|^2 d\sigma_{g_2 \cdot g_2 \circ S_2^h}(t)\right)^{1/2}, \]
	which is bounded above by
	\[\liminf_{H_1 \to \infty} \frac{1}{H_1} \sum_{h_1=0}^{H_1-1}\left( \limsup_{N \to \infty}\sup_{t \in \RR} \left| \frac{1}{N} \sum_{n=0}^{N-1} F_{1, h_1}(T^{a_1n}x)F_{2, \vec{h}(1)}(T^{a_2n}x)e^{2\pi int} \right|^2\right)^{1/2}. \]
	After we apply the Cauchy-Schwarz inequality (on the averages over $H_1$), we obtain the desired inequality for the case $k = 2$.
	
	Now suppose the estimate holds when we have $k-1$ terms. By applying van der Corput's lemma and the Cauchy-Schwarz inequality, the left hand side of the estimate $(\ref{dynEstMult})$ is bounded above by the product of a constant that only depends on the values of $a_1$ and $a_2$ and 
	\[ \liminf_{H_1 \to \infty} \left( \frac{1}{H_1} \sum_{h_1 = 1}^{H_1} \limsup_{N \to \infty} \norm{\frac{1}{N} \sum_{n=1}^N F_{1, \vec{h}(k-1)} (T^{a_1n}x)F_{2, \vec{h}(k-1)} (T^{a_2n}x) \prod_{i=2}^k (g_i \cdot g_i \circ S_i^{h_1}) \circ (S_iS_1^{-1})^n}_{L^2(\nu)}^2 \right)^{1/2} \, , \] 
	and we can apply the inductive hypothesis on this $\limsup$ of the square of the $L^2$-norm above and the Cauchy-Schwarz inequality to obtain the desired estimate.
\end{proof}
The preceding lemma allows us to identify the desired set of full measure for each positive integer $k$.
\begin{proof}[Proof of Theorem $\ref{CVto0Thm}$]
	We will first show that for each positive integer $k \geq 1$, there exists a set of full measure $\tilde{X}_k$ such that the statement of Theorem $\ref{CVto0Thm}$ holds for this particular $k$.
	
	The set $\tilde{X}_1$ can be obtained from the double recurrence Wiener-Wintner result \cite{ADM} by applying the spectral theorem. For $k \geq 2$, we consider a set
	\begin{align*}&\tilde{X}_k = \left\{x \in X: \liminf_{H_1 \to \infty} \left( \frac{1}{H_1} \sum_{h_1=1}^{H_1} \liminf_{H_2 \to \infty} \frac{1}{H_2} \sum_{h_2=1}^{H_2} \cdots \right. \right. \\
	&\left. \left. \liminf_{H_k \to \infty}\frac{1}{H_{k-1}} \sum_{h_k=1}^{H_k} \limsup_{N \to \infty} \sup_{t \in \RR} \left| \frac{1}{N} \sum_{n=1}^N F_{1, \vec{h}(k-1)}(T^{a_1n}x)F_{2, \vec{h}(k-1)}(T^{a_2n}x) e^{2\pi int}\right|^2 \right)^{2^{-(k-1)}} = 0 \right\}\, . \end{align*}
	We will show that the set on the right hand side is indeed the desired set of full measure. To first show that $\mu(\tilde{X}_k) = 1$, we  compute that 
	\begin{align}
	&\label{inductInt}\int \liminf_{H_1 \to \infty} \left(\frac{1}{H_1} \sum_{h_1=1}^{H_1} \liminf_{H_2 \to \infty} \frac{1}{H_2} \sum_{h_2=1}^{H_2} \cdots \right. \\
	&\left. \liminf_{H_{k-1} \to \infty}\frac{1}{H_{k-1}} \sum_{h_{k-1}=1}^{H_{k-1}} \limsup_{N \to \infty} \sup_{t \in \RR} \left| \frac{1}{N} \sum_{n=1}^N F_{1, \vec{h}(k-1)}(T^{a_1n}x)F_{2, \vec{h}(k-1)}(T^{a_2n}x)e^{2\pi int} \right|^2 \right)^{2^{-(k-1)}} \, d\mu = 0, \nonumber
	\end{align}
	which would show that the non-negative term inside the integral equals zero for $\mu$-a.e. $x \in X$. To do so, we apply Fatou's lemma and H\"{o}lder's inequality to show that the integral above is bounded above by
	\begin{align*}
	& \liminf_{H_1 \to \infty} \left(\frac{1}{H_1} \sum_{h_1=1}^{H_1}  \liminf_{H_2 \to \infty} \frac{1}{H_2} \sum_{h_2=1}^{H_2} \cdots \right. \\
	&\left. \liminf_{H_{k-1} \to \infty}\frac{1}{H_{k-1}} \sum_{h_{k-1}=1}^{H_{k-1}} \int \limsup_{N \to \infty} \sup_{t \in \RR} \left| \frac{1}{N} \sum_{n=1}^N F_{1, \vec{h}(k-1)}(T^{a_1n}x)F_{2, \vec{h}(k-1)}(T^{a_2n}x)e^{2\pi int} \right|^2 \, d\mu \right)^{2^{-(k-1)}}.
	\end{align*}
	Note that the last integral is bounded above by $\displaystyle{C \cdot \min_{i=1, 2} \vertiii{F_{i, \vec{h}(k-1)}}_3^2}$ by the estimate $(\ref{estimate})$, where $C$ is a constant that only depends on $a_1$ and $a_2$. By letting $H_j$ go to infinity for each $j = 1, 2, \ldots, k-1$, we conclude that the integral on the left hand side of $(\ref{inductInt})$ is bounded above by $C$ times the minimum of the power of $\vertiii{f_1}_{k+2}$ or $\vertiii{f_2}_{k+2}$. Since either $f_1$ or $f_2$ belongs to $\mathcal{Z}_{k+1}(T)^\perp$, we know that either $\vertiii{f_1}_{k+2} = 0$ or $\vertiii{f_2}_{k+2} = 0$. Thus, $(\ref{inductInt})$ holds, which implies that $\tilde{X}_k$ is indeed a set of full measure.
	
	Now we need to show that if $x \in \tilde{X}_k$, then $(\ref{CVto0})$ holds. But this follows immediately from Lemma $\ref{dynEstMultLemma}$, since if $x \in \tilde{X}_k$, the right hand side of $(\ref{dynEstMult})$, which is an upper bound for the $\limsup$ of the averages in $(\ref{CVto0})$, is $0$. 
	
	Hence, we conclude the proof by setting $\tilde{X} = \bigcap_{k=1}^\infty \tilde{X}_k$. We note that $\tilde{X}$ is a countable intersection of sets of full measures, so $\tilde{X}$ must be a set of full measure as well. 
\end{proof}

\section*{Acknowledgment}
We thank the anonymous referee for informing us about a gap in the argument in our previous preprint.


\begin{thebibliography}{99}
	\bibitem{A}
	I. Assani.
	\newblock Multiple return times theorems for weakly mixing systems.
	\newblock {\em Ann. Inst. Henri Poincar\'{e}, Probabilit\'{e}s et Statistiques.} \textbf{36}, 153--165, 2000.
	
	\bibitem{ADM}
	I. Assani, D. Duncan, R. Moore.
	\newblock Pointwise characteristic factors for Wiener-Wintner double recurrence theorem.
	\newblock {\em Ergodic Theory Dynam. Systems.} \textbf{36} 1037--1066, 2016.
	
	\bibitem{AM}
	I. Assani, R. Moore.
	\newblock A good universal weight for nonconventional ergodic averages in norm.
	\newblock {\em Ergodic Theory Dynam. Systems.} \textbf{37}, 1009--1025, 2017
	
	\bibitem{BoRet}
	J. Bourgain.
	\newblock Return time sequences of dynamical systems.
	\newblock Unpublished preprint, 1988
	
	\bibitem{BoDR}
	J. Bourgain.
	\newblock Double recurrence and almost sure convergence.
	\newblock \textit{J. reine angew. Math.} \textbf{404}, 140--161, 1990.
	
	\bibitem{BFKO}
	J. Bourgain, H. Furstenberg, Y. Katznelson, D. Ornstein.
	\newblock Appendix on return-times sequences.
	\newblock {\em Publ. Math. Inst. Hautes \'{E}tudes Sci.} \textbf{69}, 42--45, 1989
	
	\bibitem{Br}
	A. Brunel.
	\newblock Sur quelques probl\`{e}mes de la th\`{e}orie ergodique ponctuelle.
	\newblock PhD Thesis. 1966
	
	\bibitem{Fu}
	H. Furstenberg.
	\newblock Ergodic behavior of diagonal measures and a theorem of {S}zemer\'edi on arithmetic progressions.
	\newblock \textit{J. Anal. Math} \textbf{31}, 204--256, 1977.
	
	\bibitem{FH}
	N. Frantzikinakis, B. Host.
	\newblock Weighted multiple ergodic averages and correlation sequences.
	\newblock {\em Ergodic Theory Dynam. Systems.} \textbf{38}, 81--142, 2018.
	
	\bibitem{H}
	B. Host.
	\newblock Ergodic seminorms for commuting transformations and applications.
	\newblock \textit{Studia Math.} \textbf{195}, 31--49, 2009.
	
	\bibitem{HK}
	B. Host, B. Kra.
	\newblock Nonconventional ergodic averages and nilmanifolds.
	\newblock \textit{Ann. of Math.} \textbf{161}, 387--488, 2005.
	
	\bibitem{L}
	A. Leibman.
	\newblock Pointwise convergence of ergodic averages for polynomial actions of {${\Bbb Z}^d$} by translations on a nilmanifold.
	\newblock {\em Ergodic Theory Dynam. Systems.} \textbf{25}, 215--225, 2005.
	
	\bibitem{T}
	T. Tao.
	\newblock Norm convergence for multiple ergodic averages for commuting transformations.
	\newblock \textit{Ergodic Theory Dynam. Systems.} \textbf{28}, 657--688, 2008.
	
	\bibitem{Z}
	T. Ziegler.
	\newblock Universal characteristic factors and {F}urstenberg averages.
	\newblock \textit{J. Amer. Math. Soc.} \textbf{20}, 53--87, 2006.
\end{thebibliography}
\end{document}